\newtheorem{Lemma}{Lemma}
\newtheorem{Conjecture}{Conjecture}
\newtheorem{Theorem}{Theorem}
\newtheorem{Definition}{Definition}
\newtheorem{Corollary}{Corollary}
\newcommand{\vol}{\text{vol}}
\newcommand{\conv}{\text{conv}}
\begin{document}
\title{Rectifications of Convex Polyhedra}
\author{Samuel Reid\thanks{Geometric Energy Corporation. $\mathsf{sam@geometricenergy.ca}$}\;$^{,}$\thanks{University of Calgary, Department of Mathematics \& Statistics.}}
\maketitle

\begin{abstract}
A convex polyhedron, that is, a compact subset of $\mathbb{R}^3$ which is the intersection of finitely 
many closed half-spaces, can be rectified by taking the convex hull of the midpoints of the edges 
of the polyhedron. We derive expressions for the side lengths and areas of rectifications of regular 
polygons in plane, and use these results to compute surface areas and volumes of various convex polyhedra.
We introduce rectification sequences, show that there are exactly two disjoint pure rectification sequences 
generated by the platonic solids, and formulate new results related to the Mahler conjecture.
\end{abstract}

\section{Rectifications of Convex Polygons}
We iterate the geometric process of rectification, or maximal truncation, on a seed polyhedron to obtain metric information regarding operations of polyhedra. We introduce the notion of rectification for polygons and polyhedra; rectifications of convex $d$-polytopes for dimensions $d \geq 4$ will be presented in future work. This topic leads off of the classical polyhedral geometry seen in the works of H.S.M. Coxeter \cite{CoxeterMiller},\cite{Coxeter}, Norman Johnson \cite{Johnson1},\cite{Johnson2}, and others such as Cromwell \cite{Cromwell}, Conway \cite{Conway}, and T\'{o}th \cite{Toth}. Intuitively known since antiquity, define:

\begin{Definition}
Let $P_{n} = \conv\{x_{1},...,x_{n}\} \subset \mathbb{R}^2$ be a convex polygon with $n$ vertices and $n$ edges. Then,
$$R_{1}[P_{n}] = \conv\left\{\frac{x_{i} + x_{i+1}}{2} \; | \; (x_{i},x_{i+1}) \in E(G_{P_{n}}), 1 \leq i \leq n, x_{n+1}=x_{1}.\right\},$$
where $E(G_{P_{n}})$ is the edge set of the graph $G_{P_{n}}$; the combinatorial structure of $P_{n}$.
\end{Definition}

\begin{figure}[h!]\label{polygonrectifications}
\begin{center}
\includegraphics[scale=0.25]{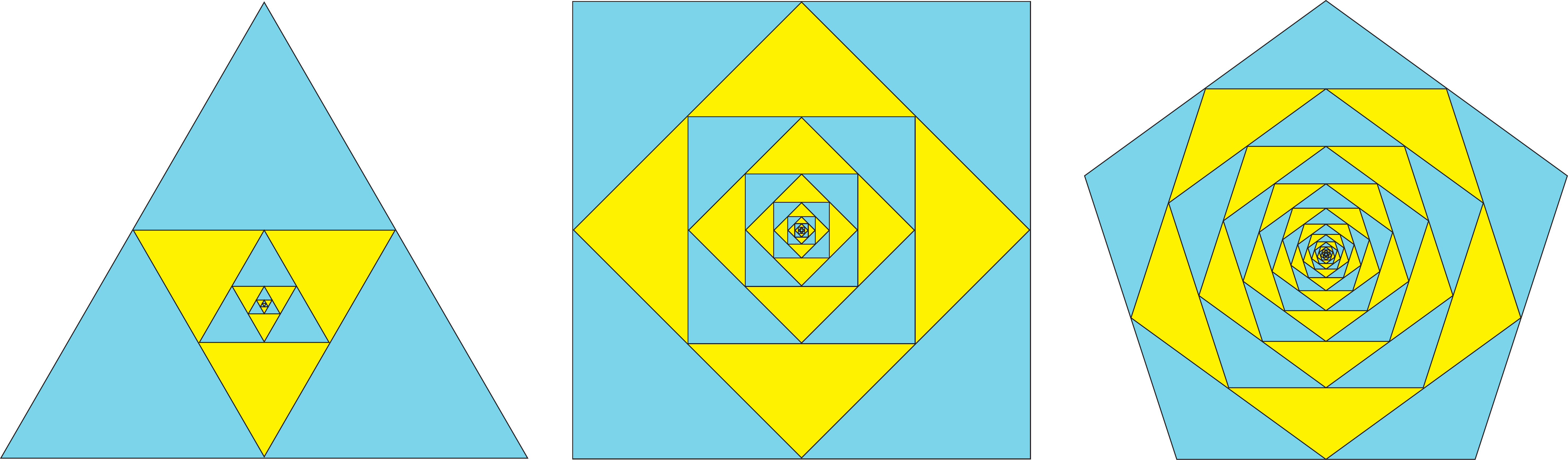}
\end{center}
\caption{The infinite family of polygon rectifications $R_{\infty}(P_{3})$, $R_{\infty}(P_{4})$, and $R_{\infty}(P_{5})$.}
\end{figure}

\begin{Theorem}\label{areathm}
Let $P_{n}$ denote a regular $n$-gon with $\text{area}(P_{n})=1$. Then,
$$\text{area}(R_{k}(P_{n})) = \left(\frac{1 - \cos(\theta_{n})}{2}\right)^{k}$$
and
$$\text{area}(R_{\infty}(P_{n})) = \frac{2}{1 + \cos(\theta_{n})}$$
where $\theta_{n} = \frac{\pi(n-2)}{n}$ is the interior angle of $P_{n}=R_{0}(P_{n})$, $R_{k}(P_{n})$ is the $k^{\text{th}}$ rectification of $P_{n}$ and
$$R_{\infty}(P_{n}) = \bigcup_{k=0}^{\infty} R_{k}(P_{n})$$
\end{Theorem}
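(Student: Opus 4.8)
The plan is to reduce everything to a single self-similarity computation: that a rectification of a regular $n$-gon is again a regular $n$-gon, scaled by a fixed factor and rotated by $\pi/n$. First I would fix coordinates, placing $R_{0}(P_{n})$ with its center at the origin and circumradius $\rho$, so that its vertices are $x_{j} = \rho(\cos\alpha_{j}, \sin\alpha_{j})$ with $\alpha_{j} = 2\pi j/n$. Computing the edge midpoints $m_{j} = \tfrac{1}{2}(x_{j} + x_{j+1})$ and applying the sum-to-product identities, I expect to obtain $m_{j} = \rho\cos(\pi/n)\,(\cos\beta_{j}, \sin\beta_{j})$ with $\beta_{j} = \alpha_{j} + \pi/n$. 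This shows the midpoints are equally spaced on a circle of radius $\rho\cos(\pi/n)$, hence form a regular $n$-gon whose circumradius is scaled by $\cos(\pi/n)$ and whose orientation is rotated by $\pi/n$.

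Since area scales as the square of any linear dimension, this yields the per-step ratio $\area(R_{k+1}(P_{n}))/\area(R_{k}(P_{n})) = \cos^{2}(\pi/n)$. The next step is purely trigonometric: using $\cos^{2}(\pi/n) = \tfrac{1}{2}(1+\cos(2\pi/n))$ together with $\theta_{n} = \pi - 2\pi/n$, so that $\cos(2\pi/n) = -\cos\theta_{n}$, I would rewrite the ratio as $\tfrac{1}{2}(1 - \cos\theta_{n})$. Because the rectification of a regular $n$-gon is again a regular $n$-gon, this ratio is identical at every stage, and an induction starting from $\area(R_{0}(P_{n})) = 1$ gives the closed form $\area(R_{k}(P_{n})) = \left(\tfrac{1-\cos\theta_{n}}{2}\right)^{k}$.

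For the infinite rectification I would sum the resulting geometric series. Writing $r_{n} = \tfrac{1}{2}(1-\cos\theta_{n})$ and noting $0 < r_{n} < 1$ for every $n \geq 3$ (since $\theta_{n} \in (0,\pi)$), the accumulated area is $\sum_{k=0}^{\infty} r_{n}^{k} = (1-r_{n})^{-1}$, and substituting $1 - r_{n} = \tfrac{1}{2}(1+\cos\theta_{n})$ produces exactly $\tfrac{2}{1+\cos\theta_{n}}$.

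The step that needs the most care is the meaning of $\area(R_{\infty}(P_{n}))$. Because each rectification is the convex hull of boundary points of its predecessor, the sets are nested, $R_{k+1}(P_{n}) \subset R_{k}(P_{n})$, so the literal set-theoretic union collapses to $R_{0}(P_{n})$; the value $\tfrac{2}{1+\cos\theta_{n}} > 1$ is instead the accumulated area across all stages, i.e.\ the sum $\sum_{k} \area(R_{k}(P_{n}))$. I would therefore make explicit at the outset that $\area(R_{\infty}(P_{n}))$ denotes this total swept area (equivalently, the rectifications taken as disjoint copies, as drawn in the figure above), after which the geometric-series computation closes the argument. Verifying the self-similarity claim rigorously --- that the midpoints genuinely form a regular $n$-gon, not merely an inscribed figure that looks equilateral --- is the only other place I would slow down, but the sum-to-product computation settles it cleanly.
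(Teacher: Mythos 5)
Your proof is correct and arrives at the same per-step ratio $\frac{1-\cos(\theta_{n})}{2}$ as the paper, but by a genuinely different route. The paper works with side lengths: it solves $\area(P_{n}) = \frac{1}{4}ns^{2}\cot(\pi/n) = 1$ for $s^{2}$, applies the law of cosines at a vertex to get $s_{k+1}^{2} = \frac{s_{k}^{2}}{2}(1-\cos(\theta_{n}))$, and carries an explicit formula for $s_{k}^{2}$ through the induction before substituting back into the area formula. You instead work with the circumradius: the sum-to-product computation shows the edge midpoints are equally spaced on a circle of radius $\rho\cos(\pi/n)$, so each rectification is a similar copy scaled by $\cos(\pi/n)$, and the identity $\cos^{2}(\pi/n) = \frac{1}{2}(1+\cos(2\pi/n)) = \frac{1}{2}(1-\cos(\theta_{n}))$ converts the squared scale factor into the stated ratio. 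Your version makes the induction essentially trivial and, importantly, \emph{proves} that each $R_{k}(P_{n})$ is again a regular $n$-gon --- a fact the paper uses implicitly every time it re-applies the regular-polygon area formula. The paper's side-length bookkeeping has the compensating benefit of producing the explicit $s_{k}$ needed for the subsequent perimeter corollary. Finally, your remark about $R_{\infty}(P_{n})$ identifies a genuine defect in the paper's own argument: since $R_{k+1}(P_{n}) \subset R_{k}(P_{n})$, the union $\bigcup_{k} R_{k}(P_{n})$ is just $P_{n}$ and has area $1$, so the paper's step $\area\left(\bigcup_{k} R_{k}(P_{n})\right) = \sum_{k}\area(R_{k}(P_{n}))$ is invalid as written (it requires disjointness). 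Your explicit reinterpretation of $\area(R_{\infty}(P_{n}))$ as the accumulated sum $\sum_{k}\area(R_{k}(P_{n}))$ is the reading under which the stated value $\frac{2}{1+\cos(\theta_{n})} > 1$ is correct, and your write-up repairs the gap rather than reproducing it.
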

\begin{proof}
Let $P_{n}$ be a regular $n$-gon with $\text{area}(P_{n})=1$. Then by $\text{area}(P_{n}) = \frac{1}{4} ns^2 \cot\left(\frac{\pi}{n}\right)$, where $s$ is the side length of $P_{n}$,
$$s^2 = \frac{4\tan\left(\frac{\pi}{n}\right)}{n}$$
By the law of cosines,
$$s_{1}^{2} = \left(\frac{s}{2}\right)^2 + \left(\frac{s}{2}\right)^2 - 2\left(\frac{s}{2}\right)^2 \cos(\theta_{n}) = \frac{s^2}{2}(1-\cos(\theta_{n})) = \frac{2\tan\left(\frac{\pi}{n}\right)}{n}(1-\cos(\theta_{n}))$$
where $\theta_{n} = \frac{\pi(n-2)}{n}$ is the interior angle of $P_{n}$ and $s_{1}$ is the side length of $R_{1}(P_{n})$. We can then compute the area of the first rectification of $P_{n}$ as follows,
$$\text{area}(R_{1}(P_{n})) = \frac{1}{4}ns_{1}^2 \cot\left(\frac{\pi}{n}\right) = \frac{1 - \cos(\theta_{n})}{2}$$
Now, let $k \geq 2$ be arbitrary and let $s_{k}$ be the side length of $R_{k}(P_{n})$. For induction, assume that
$$s_{k}^2 = \frac{\tan\left(\frac{\pi}{n}\right)}{2^{k-2}n}(1-\cos(\theta_{n}))^k$$
By the law of cosines,
$$s_{k+1}^{2} = \frac{s_{k}^2}{2}(1-\cos(\theta_{n})) = \frac{\tan\left(\frac{\pi}{n}\right)}{2^{k-1}n}(1-\cos(\theta_{n}))^{k+1}$$
Then,
$$\text{area}(R_{k}(P_{n})) = \frac{1}{4}ns_{k}^{2}\cot\left(\frac{\pi}{n}\right) = \left(\frac{1-\cos(\theta_{n})}{2}\right)^{k}$$
implies that
$$\text{area}(R_{k+1}(P_{n})) = \frac{1}{4}ns_{k+1}^{2}\cot\left(\frac{\pi}{n}\right) = \left(\frac{1 - \cos(\theta_{n})}{2}\right)^{k+1}$$
Therefore the formula for $\text{area}(R_{k}(P_{n}))$ holds for all $k \in \mathbb{N}$ by induction. We now compute the area of the infinite family of polygon rectifications $R_{\infty}(P_{n})$.
$$\text{area}(R_{\infty}(P_{n})) = \text{area}\left(\bigcup_{k=0}^{\infty} R_{k}(P_{n})\right) = \sum_{k=0}^{\infty} \text{area}(R_{k}(P_{n})) = \sum_{k=0}^{\infty} \left(\frac{1 - \cos(\theta_{n})}{2}\right)^{k} = \frac{2}{1 + \cos(\theta_{n})}$$
by the convergence of the geometric series since $\left|\frac{1 - \cos(\theta_{n})}{2}\right| < 1$ for all $P_{n}$ with $n \geq 3$.
\end{proof}

While the perimeter of the infinite rectification $R_{\infty}(P_{n})$ follows trivially from the proof of Theorem \ref{areathm}, it is in general a difficult problem to determine to determine the surface area of $R_{\infty}(\text{conv}\{P\})$ when $P \subset \mathbb{R}^d$ with $d \geq 3$.

\begin{Corollary}\label{perimcor}
Let $P_{n}$ be a regular $n$-gon with $\text{area}(P_{n})=1$. Then,
$$\text{Perimeter}(R_{\infty}(P_{n})) = \frac{2 \sqrt{n \tan(\pi /n)}}{1 - |\cos(\pi/n)|}$$
\end{Corollary}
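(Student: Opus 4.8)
The plan is to read off the perimeter of each individual rectification and then sum a geometric series, exactly paralleling the area computation in Theorem~\ref{areathm}. Since rectification sends a regular $n$-gon to a regular $n$-gon (the $n$ edge-midpoints of a regular $n$-gon are themselves the vertices of a regular $n$-gon), each $R_{k}(P_{n})$ is a regular $n$-gon of side length $s_{k}$ and hence has perimeter $n s_{k}$. The recurrence $s_{k+1}^{2} = \tfrac{1}{2}(1-\cos\theta_{n})s_{k}^{2}$ extracted in the proof of Theorem~\ref{areathm} shows that the sequence $(s_{k}^{2})$ is geometric with ratio $\rho := \tfrac{1}{2}(1-\cos\theta_{n})$, so $s_{k}^{2} = s_{0}^{2}\rho^{k}$ and therefore $s_{k} = s_{0}\,\rho^{k/2}$, a geometric sequence in $k$ with ratio $\sqrt{\rho}$.

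Following the additive convention used for $\text{area}(R_{\infty}(P_{n}))$ in Theorem~\ref{areathm}, I would then take the perimeter of the infinite family to be the sum of the perimeters of its members and evaluate
\begin{equation*}
\text{Perimeter}(R_{\infty}(P_{n})) = \sum_{k=0}^{\infty} n s_{k} = n s_{0} \sum_{k=0}^{\infty} \left(\sqrt{\rho}\right)^{k} = \frac{n s_{0}}{1 - \sqrt{\rho}},
\end{equation*}
the geometric series converging because $0 < \rho < 1$ for every $n \geq 3$, so that $0 < \sqrt{\rho} < 1$. It then remains only to put $n s_{0}$ and $\sqrt{\rho}$ into the stated closed form.

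For the numerator, $s_{0} = s$ is the side length of $P_{n}$, and $s^{2} = 4\tan(\pi/n)/n$ gives $n s_{0} = 2\sqrt{n\tan(\pi/n)}$. The one genuinely delicate step — and the only place where care is needed — is the trigonometric reduction of $\sqrt{\rho}$. Using the half-angle identity $\sqrt{\tfrac{1}{2}(1-\cos\theta_{n})} = |\sin(\theta_{n}/2)|$ together with $\theta_{n} = \pi - 2\pi/n$, so that $\theta_{n}/2 = \pi/2 - \pi/n$ and hence $\sin(\theta_{n}/2) = \cos(\pi/n)$, I obtain $\sqrt{\rho} = |\cos(\pi/n)|$. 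Substituting both expressions into $n s_{0}/(1-\sqrt{\rho})$ yields the claimed formula. The absolute value is cosmetic, since $\cos(\pi/n) > 0$ for all $n \geq 3$, but retaining it makes the denominator manifestly the same quantity that governs convergence.
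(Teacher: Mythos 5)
Your proposal is correct and follows essentially the same route as the paper: both sum the perimeters $n s_{k}$ as a geometric series with ratio $\sqrt{(1-\cos\theta_{n})/2}$ and then simplify. You are in fact slightly more explicit than the paper at the final step, spelling out the half-angle reduction $\sqrt{(1-\cos\theta_{n})/2}=|\sin(\theta_{n}/2)|=|\cos(\pi/n)|$ that the paper compresses into ``by the definition of $\theta_{n}$.''
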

\begin{proof}
We have by the proof of Theorem \ref{areathm} that $$s_{k} = \sqrt{\frac{\tan\left(\frac{\pi}{n}\right)}{2^{k-2}n}(1-\cos(\theta_{n}))^k}$$
thus, $$\text{Perimeter}(R_{\infty}(P_{n})) = \sum_{k=0}^{\infty} n s_{k} = 2\sqrt{n\tan\left(\frac{\pi}{n}\right)} \sum_{k=0}^{\infty} \left(\sqrt{\frac{1 - \cos(\theta_{n})}{2}}\right)^{k} = \frac{\sqrt{8n \tan\left(\frac{\pi}{n}\right)}}{\sqrt{2} - \sqrt{1 - \cos(\theta_{n})}}$$
by the convergence of the geometric series since $\left|\sqrt{\frac{1 - \cos(\theta_{n})}{2}}\right| < 1$ for all $P_{n}$ with $n \geq 3$. Since $n >0$ we have by the definition of $\theta_{n}$ that
$$\text{Perimeter}(R_{\infty}(P_{n})) = \frac{2 \sqrt{n \tan(\pi /n)}}{1 - |\cos(\pi/n)|}$$
\end{proof}

We thus have a complete classification of regular polygons rectification sequences and their area 
and perimeter. We now want to generalize this problem to classifying rectification sequences of 
polyhedra in $\mathbb{R}^{3}$ and computing their geometric measures.

\section{Rectifications of Convex Polyhedra}

\begin{Definition}
Let $P = \conv\{x_{1},...,x_{v}\} \subset \mathbb{R}^3$ be a convex polyhedron with $e$ edges and $f$ faces.
Then, $$R_{1}[P] = \conv\left\{\frac{x_{i} + x_{j}}{2} \; | \; (x_{i},x_{j}) \in E(G_P)\right\},$$
where $E(G_{P})$ is the edge set of the graph $G_{P}$; the combinatorial structure of $P$.
\end{Definition}

Rectifications can be thought of as the maximal truncation of a polyhedron which sends a vertex to a face. If we consider the truncation of each vertex of a polyhedron by a plane with a normal vector coinciding with the centroid ray of the polyhedron then we obtain the rectification of the polyhedron when each new edge created by the truncation touches another newly created edge.

\begin{figure}[h!]\label{rectifiedtet}
\begin{center}
\includegraphics[scale=1]{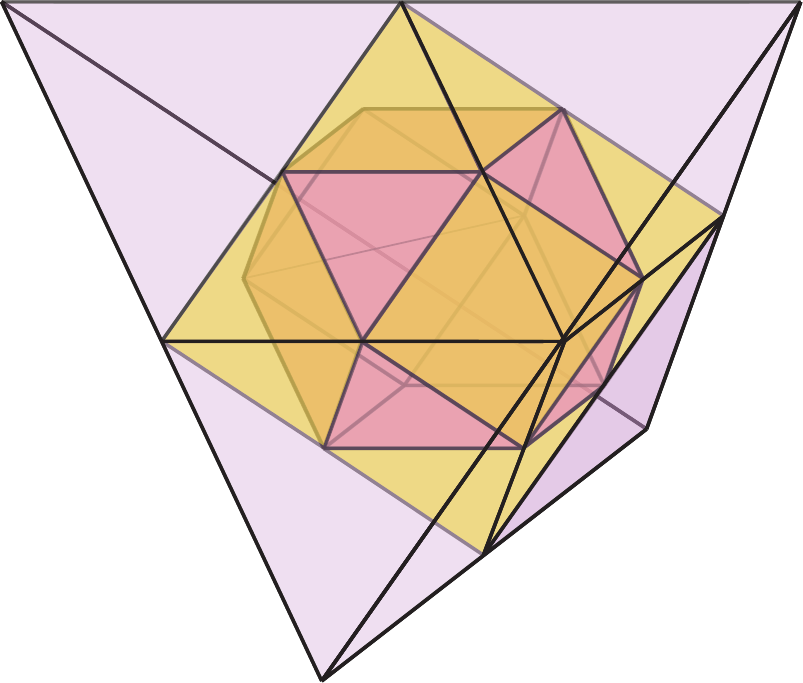}
\end{center}
\caption{The first two rectifications of a tetrahedron, namely an octahedron and a cuboctahedron inscribed inside eachother.}
\end{figure}

Rectifications of convex polyhedra can be easily understood combinatorially due to $V- E + F = \chi = 2$, as the rectification of a polyhedron can be thought of as a map between $f$-vectors
$$(v,e,f) \mapsto (e,2e,2+e)$$
There are two distinct infinite sequences of polytope rectifications which include the platonic solids; the first, denoted by $\eta_{s}$ includes the tetrahedron, octahedron, and cube and has an $f$-vector sequence given by
$$(4,6,4) \underbrace{\longrightarrow}_{\text{Tetrahedron}} (6,12,8) \underbrace{\longrightarrow}_{\text{Octahedron}} (12,24,14) \underbrace{\longrightarrow}_{\text{Cuboctahedron}} \cdot\cdot\cdot$$
and the second, denoted by $\xi_{s}$ includes the icosahedron and dodecahedron, which have the property that they have equal rectifications, and has an $f$-vector sequence given by
$$(12,30,20) \underbrace{\longrightarrow}_{\text{Icosahedron}} (30,60,32) \underbrace{\longrightarrow}_{\text{Icosidodahedron}} (60,120,62) \underbrace{\longrightarrow}_{\text{Icosidodecahedron}} \cdot\cdot\cdot$$

\begin{Theorem}
The polytope rectification sequences $\eta_{s}$ and $\xi_{s}$ are disjoint.
\end{Theorem}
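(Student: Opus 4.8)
The plan is to exhibit a single combinatorial quantity whose set of values along each sequence is easy to compute in closed form, and then to show that these two value sets are disjoint. The natural candidate is the edge count, because the rectification map $(v,e,f)\mapsto(e,2e,2+e)$ acts on the number of edges by the clean rule $e\mapsto 2e$, independent of the geometry of the particular polyhedron being rectified. Iterating this rule turns the edge counts along any rectification sequence into a geometric progression with ratio $2$.

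First I would read off the seed edge counts. In $\eta_{s}$ the seeds are the tetrahedron with $e=6$ and (via duality) the octahedron and cube with $e=12$, so every term of $\eta_{s}$ has edge count of the form $6\cdot 2^{k}$ for some $k\ge 0$. In $\xi_{s}$ the seeds are the icosahedron and dodecahedron, both with $e=30$, so every term of $\xi_{s}$ has edge count of the form $30\cdot 2^{k}$ for some $k\ge 0$. Thus the set of edge counts occurring in $\eta_{s}$ is $\{6\cdot 2^{k}:k\ge 0\}$ and the set occurring in $\xi_{s}$ is $\{30\cdot 2^{k}:k\ge 0\}$.

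The core step is then a one-line number-theoretic separation. Writing $6\cdot 2^{k}=2^{k+1}\cdot 3$ and $30\cdot 2^{m}=2^{m+1}\cdot 3\cdot 5$, any common value would force $2^{k+1}\cdot 3=2^{m+1}\cdot 3\cdot 5$, hence $2^{\,k-m}=5$, which is impossible since $5$ is odd and strictly greater than $1$ while every power of two is either $1$ or even. Therefore the two edge-count sets are disjoint. Since two polyhedra with different numbers of edges are certainly distinct, no element of $\eta_{s}$ can coincide with any element of $\xi_{s}$, and the sequences are disjoint.

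The argument has no genuinely hard part; the only point requiring care is to justify that the edge count is a legitimate invariant \emph{along the entire sequence} rather than merely for the first few terms. This is exactly what the identity $V-E+F=2$ guarantees: the map on $f$-vectors is forced, so the rule $e\mapsto 2e$ propagates without exception and the closed forms $6\cdot 2^{k}$ and $30\cdot 2^{k}$ hold for all $k$. One should also note explicitly that the branching of each sequence at its low end (octahedron and cube, respectively icosahedron and dodecahedron, sharing a common rectification) does not affect the value sets, since dual seeds share the same edge count.
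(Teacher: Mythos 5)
Your proof is correct and follows essentially the same route as the paper: both track a single $f$-vector coordinate that doubles under rectification, reduce disjointness to the equation $2^{k}\cdot 6 = 2^{m}\cdot 30$ (the paper uses $2^{k}\cdot 12 = 2^{x}\cdot 30$), and conclude impossibility because a power of two cannot equal $5$ (the paper phrases this via the irrationality of $\log_2 5$). Your elementary parity justification is, if anything, slightly cleaner than the paper's.
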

\begin{proof}
By the combinatorial characterization of rectification as an $f$-vector map $(v,e,f) \mapsto (e,2e,2+e)$, an equivalent question is whether there exists a polytope in the sequence $\eta_{s}$ which also exists in $\xi_{s}$, which implies that the existence of a pair $(k,x)$ which satisfies
$$2^{k}12 = 2^{x}30$$
but this is never satisfied for $k,x\in\mathbb{Z}$ as $k-x =\frac{\log(5)}{\log(2)} - 1 \notin \mathbb{Q}$.
\end{proof}

\begin{figure}[h!]\label{prism1}
\begin{center}
\includegraphics[scale=0.5]{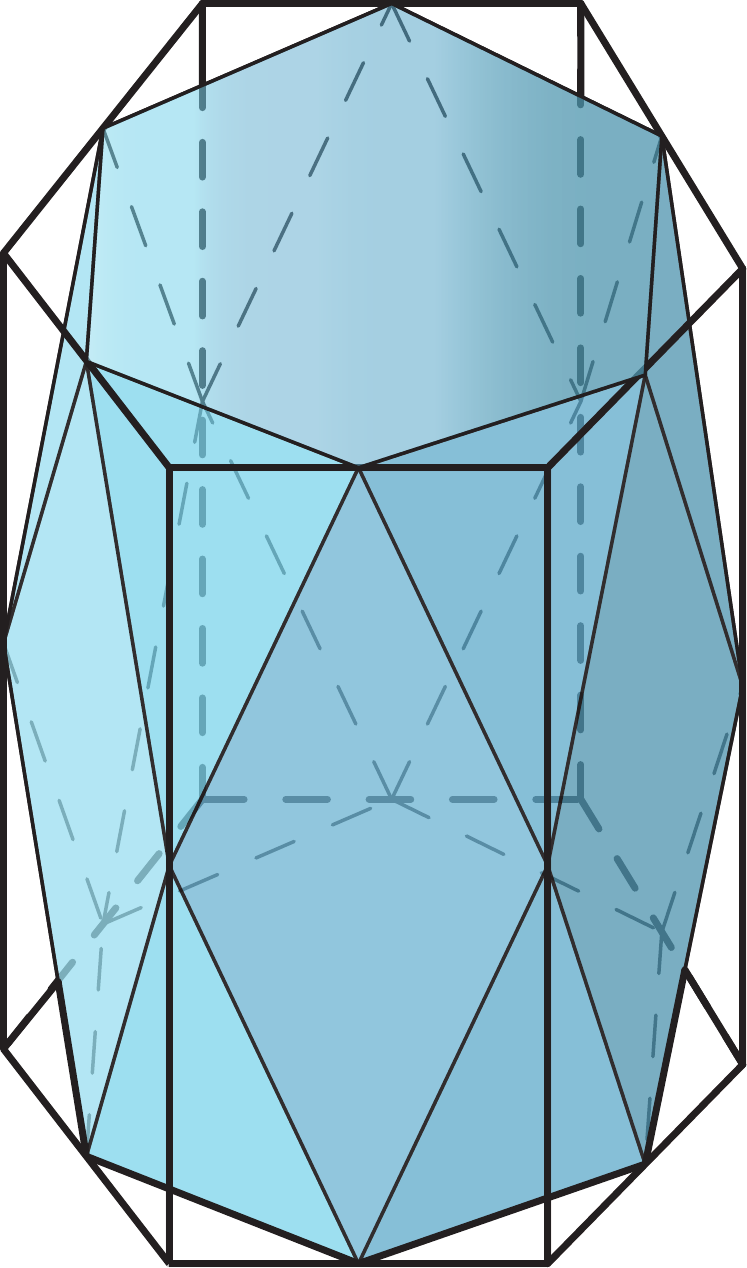}
\end{center}
\caption{The first rectification of the pentaprism.}
\end{figure}

We now calculate the volume of the first rectification of a prism generated from a regular polygon in the plane.
\begin{Theorem}
Let $P_{n}$ be a regular polygon of area 1 in the plane. Then,
$$\vol\left(R_{1}(P_{n} \times I)\right) = 1 - \frac{s^2}{48}\sin(\theta_{n})$$
where $s$ is the side length of $P_{n}$ and $\theta_{n} = \frac{\pi(n-2)}{n}$ is the internal angle of a regular $n$-gon.
\end{Theorem}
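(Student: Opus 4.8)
The plan is to view the rectification as the prism with a corner sliced off at each vertex, so that
$$\vol\left(R_{1}(P_{n}\times I)\right) = \vol(P_{n}\times I) - \sum_{v}\vol(T_{v}),$$
where $T_{v}$ denotes the solid discarded at the vertex $v$. Since $\area(P_{n})=1$ and $I$ is the unit interval, the prism has volume $1$, which supplies the leading term. Every vertex of a prism over an $n$-gon has degree three, and the rectification cuts it off along the plane through the midpoints of its three incident edges; hence $T_{v}$ is precisely the tetrahedron with apex $v$ and the three adjacent edge-midpoints as its remaining vertices. So the whole computation reduces to evaluating one such tetrahedron and counting how many appear.

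For the corner volume, at a vertex the three incident edges are the two base edges of the $n$-gon, each of length $s$ and meeting at the interior angle $\theta_{n}$, together with one lateral edge of length $1$ orthogonal to the base plane. Calling these edge vectors $a,b,c$, the midpoints are $v+\tfrac12 a$, $v+\tfrac12 b$, $v+\tfrac12 c$, so
$$\vol(T_{v}) = \frac{1}{6}\left|\det\left(\tfrac12 a,\tfrac12 b,\tfrac12 c\right)\right| = \frac{1}{48}\,\bigl|\det(a,b,c)\bigr|.$$
Because $c$ is orthogonal to the plane spanned by $a$ and $b$, the triple product factors as $|c|\,|a|\,|b|\sin\theta_{n} = s^{2}\sin\theta_{n}$, so each corner contributes $\tfrac{1}{48}s^{2}\sin\theta_{n}$; if desired this can be rewritten purely in $n$ using $s^{2}=\tfrac{4}{n}\tan(\pi/n)$ from Theorem \ref{areathm}.

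It then remains to assemble the contributions. Adjacent cutting planes meet only at the shared edge-midpoint, so the $T_{v}$ have pairwise disjoint interiors and their volumes add; subtracting the total from the prism volume gives the result. I expect the genuine obstacle to be not the determinant but the bookkeeping in this final step: correctly tallying the discarded corners over all $2n$ vertices and confirming the cuts are non-overlapping. As a safeguard I would check the constant against the case $n=4$, $s=1$, where $P_{4}\times I$ is the unit cube and its rectification is the cuboctahedron of known volume $\tfrac{5}{6}$; this pins down the numerical coefficient and guards against an off-by-a-factor error in the vertex count.
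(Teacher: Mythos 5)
The paper supplies no proof of this theorem, so there is nothing to compare your argument against; judged on its own, your decomposition (prism minus one corner tetrahedron per vertex) is the natural and correct approach, and your per-corner computation $\vol(T_{v}) = \tfrac{1}{48}\left|\det(a,b,c)\right| = \tfrac{1}{48}s^{2}\sin\theta_{n}$ is right. The problem is in the final assembly, exactly where you predicted it would be: the prism has $2n$ trivalent vertices, so carrying your own computation to completion gives
$$\vol\left(R_{1}(P_{n}\times I)\right) = 1 - 2n\cdot\frac{s^{2}}{48}\sin(\theta_{n}) = 1 - \frac{n s^{2}}{24}\sin(\theta_{n}),$$
which is \emph{not} the formula in the statement. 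The stated formula $1 - \tfrac{s^{2}}{48}\sin(\theta_{n})$ subtracts the volume of a single corner tetrahedron rather than all $2n$ of them. Your sentence ``subtracting the total from the prism volume gives the result'' therefore cannot stand as written: subtracting the total gives the displayed formula above, not the theorem's.

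Your own proposed safeguard settles which of the two is correct. For $n=4$, $s=1$, $\theta_{4}=\pi/2$, the prism is the unit cube and $R_{1}$ of it is the cuboctahedron of volume $\tfrac{5}{6}$; your completed formula gives $1 - \tfrac{4}{24} = \tfrac{5}{6}$, while the theorem as printed gives $1 - \tfrac{1}{48} = \tfrac{47}{48}$. So the missing factor of $2n$ is an error in the statement, not in your method. To make the write-up complete you should (i) carry out the sum over all $2n$ vertices explicitly and state the corrected formula, and (ii) briefly justify that the truncated prism really equals the convex hull of the edge midpoints --- i.e., that each cutting plane through the three midpoints at a vertex has every other midpoint on its far side, so that $R_{1}(P_{n}\times I)$ is exactly the prism with the $2n$ disjoint corner tetrahedra removed; you gesture at this but it is the one geometric fact the determinant computation does not deliver for free.
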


\section{Pure Rectification Sequences}
\begin{Definition}
A pure rectification sequence of length $k$ is a sequence
$$P \rightarrow R_{1}[P] \rightarrow \cdot\cdot\cdot \rightarrow R_{k-1}[P]$$
so that every element of the sequence is semiregular.
\end{Definition}

\begin{Theorem}
Let $$P \rightarrow R_{1}[P] \rightarrow \cdot\cdot\cdot \rightarrow R_{k-1}[P]$$ be a semiregular
rectification sequence. Then the f-vector and volume of $R_{k}[P]$ are
$$(f_{0}(R_{k}[P]),f_{1}(R_{k}[P]),f_{2}(R_{k}[P]))=(2^{k-1}f_{1}(P),2^{k}f_{1}(P),2+2^{k-1}f_{1}(P))$$
and
$$\vol\left(R_{k}[P]\right) = \vol(P) - \sum_{i=1}^{f_{0}(P)}\vol(\conv\{P\slash v_{i}^{0} \cup \{v_{i}^{0}\}\})
- \sum_{N=1}^{k} \sum_{j=1}^{2^{N-1}f_{1}(P)} \vol(\conv\{R_{N}[P]\slash v_{j}^{N} \cup \{v_{j}^{N}\}\})$$
where $P \slash v_{i}^{0}$ denotes the vertex figure of $P$ at $v_{i}^{0} \in V(P)$ truncated to half the edge length of $P$
and $R_{N}[P]\slash v_{j}^{N}$ denotes the vertex figure of $R_{N}[P]$ at the vertex $v_{j}^{N} \in V(R_{N}[P])$ truncated
to half the edge length of $R_{N}[P]$.
\end{Theorem}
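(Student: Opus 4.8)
The plan is to prove the $f$-vector formula and the volume formula separately, since the former is purely combinatorial and the latter is geometric. For the $f$-vector, the key observation is already recorded in the excerpt: rectification acts on $f$-vectors by the map $(v,e,f)\mapsto(e,2e,2+e)$. First I would verify that along a semiregular rectification sequence this map iterates cleanly, so that the edge count doubles at each step: $f_1(R_N[P]) = 2^N f_1(P)$. This follows by an easy induction, since the middle coordinate of the image under the map is always twice the previous edge count $e$. Given this, the vertex count of $R_k[P]$ is the edge count of $R_{k-1}[P]$, namely $2^{k-1}f_1(P)$, the edge count of $R_k[P]$ is $2^k f_1(P)$, and the face count is $2 + 2^{k-1}f_1(P)$ by Euler's formula $V - E + F = 2$. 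This reproduces the claimed triple and is the routine half of the argument.

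The volume formula is the substantive part, and I would establish it by a telescoping accounting of the material removed at each rectification step. The guiding principle is that each rectification $R_{N}[P] \to R_{N+1}[P]$ is a maximal truncation that slices off a small solid (a truncated vertex figure) at every vertex of $R_N[P]$. The natural plan is therefore to write
$$\vol(R_{k}[P]) = \vol(P) - (\text{total material removed across all } k \text{ steps}),$$
and then to identify the total removed material as the sum of the volumes of the vertex-figure simplices $\conv\{R_{N}[P]\slash v_{j}^{N} \cup \{v_{j}^{N}\}\}$, truncated to half the edge length, summed over all vertices $v_j^N$ of $R_N[P]$ and over all stages $N = 1, \dots, k$. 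The first sum in the statement, over the $f_0(P)$ vertices of the seed polyhedron $P$, accounts for the passage $P \to R_1[P]$; the double sum over $N$ and over the $2^{N-1}f_1(P)$ vertices of $R_N[P]$ (using the vertex count just computed) accounts for the remaining stages. I would verify that no removed region is double-counted by appealing to the maximal-truncation description in the excerpt: the cutting planes meet exactly where newly created edges touch, so the removed caps at distinct vertices are interior-disjoint.

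The main obstacle I anticipate is making the geometric decomposition rigorous rather than merely suggestive — in particular, justifying that the solid removed at a vertex $v_j^N$ is exactly $\conv\{R_N[P]\slash v_j^N \cup \{v_j^N\}\}$ with the vertex figure truncated to half the edge length. Here the semiregularity hypothesis is essential: because every intermediate polytope $R_N[P]$ is semiregular, all of its edges share a common length, so the phrase ``half the edge length'' is unambiguous and the midpoint-of-edge construction of Definition~2 places every new vertex at the same fractional position along its edge. This uniformity is what guarantees that the truncating planes are consistent and that the removed cap at each vertex is a well-defined convex simplex on the vertex figure. I would therefore first record semiregularity as the structural input that controls edge lengths, then argue that the convex hull defining $R_{N+1}[P]$ coincides with $R_N[P]$ minus the union of these disjoint caps, and finally sum the cap volumes to obtain the telescoped expression. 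The remaining step — checking that the union of caps is exactly the complement of $R_{N+1}[P]$ in $R_N[P]$ — is where I expect the argument to require the most care, since it relies on the maximal-truncation property that adjacent truncation planes meet precisely at the midpoints, leaving no residual material between caps and no overlap.
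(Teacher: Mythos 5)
First, a point of reference: the paper states this theorem without any proof at all, so there is no ``paper's approach'' to measure yours against; your outline is the natural candidate argument. Your treatment of the $f$-vector is essentially correct and complete: iterating $(v,e,f)\mapsto(e,2e,2+e)$ gives $f_{1}(R_{N}[P])=2^{N}f_{1}(P)$ by induction, the vertex count of $R_{k}[P]$ is the edge count of $R_{k-1}[P]$, and the face count follows from Euler's formula.

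The volume half has two genuine problems. The first is an off-by-one that you reproduce without noticing. By your own accounting, the first sum handles the step $P\to R_{1}[P]$, and the index-$N$ term of the double sum handles $R_{N}[P]\to R_{N+1}[P]$: the caps being removed are pyramids with apexes at the $2^{N-1}f_{1}(P)=f_{0}(R_{N}[P])$ vertices of $R_{N}[P]$ over vertex figures of $R_{N}[P]$. Running $N$ from $1$ to $k$ therefore subtracts the material for $k+1$ rectification steps and telescopes to $\vol(R_{k+1}[P])$, not $\vol(R_{k}[P])$; either the upper limit should be $k-1$ or the left-hand side should be $R_{k+1}[P]$. You describe a decomposition that proves one formula while asserting the other, and this needs to be resolved, not glossed. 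The second problem is the step you yourself flag as delicate: that $R_{N+1}[P]$ equals $R_{N}[P]$ minus a union of interior-disjoint caps. For this to hold, the midpoints of the edges incident to a fixed vertex $v$ of $R_{N}[P]$ must be coplanar; otherwise $\conv\{R_{N}[P]/ v\cup\{v\}\}$ is not a pyramid lying outside $R_{N+1}[P]$ but instead overlaps $R_{N+1}[P]$ in the full-dimensional set $\conv\{R_{N}[P]/ v\}$, and the subtraction over-counts. This coplanarity is where semiregularity actually does the work: all edges of $R_{N}[P]$ have a common length and all vertices lie on a common sphere, so the neighbors of $v$ lie on the intersection of two spheres, hence on a plane, and the edge midpoints lie on the image of that plane under the homothety of ratio $\tfrac{1}{2}$ centered at $v$. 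Your proposal invokes semiregularity only to make ``half the edge length'' unambiguous, which is not the load-bearing use; without the coplanarity argument the claim that the caps exactly tile $R_{N}[P]\setminus R_{N+1}[P]$, and hence the volume formula itself, remains unproven.
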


\begin{Conjecture}
Let $P$ be a convex polyhedron. If $R_{1}[P]$ is contained in a pure rectification sequence then
$$R_{1}[P] = \lambda R_{1}[P^{\circ}], \exists \lambda >0.$$
\end{Conjecture}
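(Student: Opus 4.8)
The plan is to prove the conjecture through the \emph{midsphere} (intersphere) characterization of semiregular polyhedra, exploiting the classical fact that polar duality taken with respect to a midsphere fixes the points at which that sphere touches the edges. Since rectification is by definition the convex hull of edge midpoints, everything reduces to showing that the edge midpoints of $P$ and those of $P^{\circ}$ coincide up to a single common scaling. I would first record the trivial but essential homogeneity of the operation: because midpoints transform linearly, $R_{1}[\lambda Q] = \lambda R_{1}[Q]$ for every $\lambda > 0$ and every convex polyhedron $Q$. This allows me to work up to similarity throughout and to absorb all normalization constants into the final constant $\lambda$.

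Next I would extract geometric structure from the hypothesis. If $R_{1}[P]$ is contained in a pure rectification sequence then $R_{1}[P]$ is semiregular, hence vertex-transitive, so its vertices all lie on a common sphere $S$ centered at some point $c$. But those vertices are, by definition, precisely the edge midpoints $m_{e}$ of $P$. The key claim is that $S$ is in fact a \emph{midsphere} of $P$, tangent to every edge of $P$ with the point of tangency on $e$ being exactly $m_{e}$. This is equivalent to $c$ being equidistant from the two endpoints of each edge; since the edge graph $G_{P}$ is connected, that condition forces all vertices of $P$ onto a common circumsphere about $c$, after which $m_{e}$ is automatically the foot of the perpendicular from $c$ to the line of $e$, so $S$ is tangent to $e$ at $m_{e}$ and $R_{1}[P] = \conv\{m_{e}\}$ is the convex hull of edge-tangency points.

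With $P$ normalized so that $S$ is the unit sphere centered at the origin, I would then invoke the duality-of-tangency fact for edge-tangent polytopes: the polar dual $P^{\circ}$ with respect to $S$ again has $S$ as a midsphere, and the tangency point of an edge $e$ of $P$ equals the tangency point of the corresponding dual edge $e^{\ast}$ of $P^{\circ}$, using that edges of a $3$-polytope are in bijection with edges of its dual. Consequently the set of tangency points of $P$ coincides with that of $P^{\circ}$, and since each rectification is the convex hull of its polytope's edge-tangency points, $R_{1}[P] = R_{1}[P^{\circ}]$ exactly in this normalization. Undoing the normalization and applying the homogeneity from the first step yields $R_{1}[P] = \lambda R_{1}[P^{\circ}]$ for a unique $\lambda > 0$, exactly as the cube/octahedron and dodecahedron/icosahedron pairs illustrate.

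The main obstacle is concentrated in the middle step and is twofold. First, one must rule out the degenerate possibility that the concyclic edge midpoints of a semiregular $R_{1}[P]$ arise from a $P$ that admits \emph{no} circumsphere; knowing merely that $R_{1}[P]$ has a circumscribed sphere is not enough, and I expect to need vertex-transitivity to transport the symmetries of $R_{1}[P]$ back to $P$ and thereby identify the natural center of $R_{1}[P]$ with a genuine circumcenter of $P$. Second, the duality-of-tangency statement is canonical only up to the M\"{o}bius freedom in the Koebe--Andreev--Thurston midsphere representation, so care is needed to fix a normalization in which the primal and dual tangency points truly agree rather than being merely M\"{o}bius-equivalent; it is precisely here that the hypothesis of lying in a pure rectification sequence, forcing high symmetry and hence a M\"{o}bius-rigid midsphere position, should be brought to bear.
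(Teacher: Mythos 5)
First, a point of order: the paper does not prove this statement at all --- it is stated as a Conjecture and left open (the later ``proof'' that cites it merely assumes it), so there is no proof of record to compare yours against; your proposal has to stand on its own. Your midsphere strategy is a natural line of attack, and you correctly isolate one genuine difficulty: knowing that the edge midpoints of $P$ (the vertices of the semiregular $R_{1}[P]$) lie on a common sphere $S$ about $c$ does not by itself make $S$ a midsphere of $P$ tangent at those midpoints. For that you need $c$ to be equidistant from the two endpoints of every edge, i.e. a circumsphere of $P$ centred at $c$, and the implication only runs in the direction you state (circumsphere $\Rightarrow$ tangency at midpoints), not in the direction you need. You flag this honestly, so I will not belabour it.

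The more serious problem is a second gap you do not flag, in the duality step. Grant that $P$ is edge-tangent to the unit sphere $S$ centred at $c$ with tangency point of each edge $e$ equal to its midpoint $m_{e}$. It is indeed classical that the polar $P^{\circ}$ taken with respect to $S$ is again edge-tangent to $S$ and that the dual edge $e^{\ast}$ touches $S$ at the same point $m_{e}$, so the two tangency-point sets coincide. But $R_{1}[P^{\circ}]$ is by definition the convex hull of the \emph{midpoints} of the edges of $P^{\circ}$, not of their tangency points, and the tangency point of $e^{\ast}$ is its midpoint only when the two endpoints of $e^{\ast}$ are equidistant from $c$. Those endpoints are the poles of the two facets of $P$ meeting along $e$, and they are equidistant from $c$ exactly when those facets lie at equal distance from $c$; running over all edges and using connectivity of the dual graph, your argument silently requires every facet of $P$ to be tangent to a common sphere about $c$. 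So you need $P$ to have concentric circumsphere, midsphere \emph{and} insphere --- true for the Platonic solids, which are presumably the only seeds that actually occur, but not something that follows from anything you have established about $P$. Without it your argument delivers $R_{1}[P] = \conv\{\text{tangency points of } P^{\circ}\}$, which is not yet $\lambda R_{1}[P^{\circ}]$. (By contrast, the M\"{o}bius-rigidity worry in your final paragraph is a red herring: once a concrete midsphere of the concrete polytope $P$ is in hand there is no Koebe--Andreev--Thurston normalisation freedom left to worry about; the real missing ingredient is the insphere condition above.)
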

\begin{Lemma}
Let $P$ be a convex polyhedron so that $R_{1}[P]=\lambda R_{1}[P^{\circ}], \exists \lambda >0$.
If $$\vol(R_{1}[P]) \geq \sqrt{\frac{32 \lambda}{3}}$$
then the Mahler conjecture holds for $R_{1}[P]$ and $R_{1}[P] \times R_{1}[P]^{\circ}$.
\end{Lemma}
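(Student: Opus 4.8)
The plan is to connect the hypothesis to the Mahler conjecture via the Mahler volume functional. Recall the Mahler conjecture asserts that for a centrally symmetric convex body $K \subset \mathbb{R}^3$, the Mahler volume $\mathcal{M}(K) = \vol(K)\vol(K^{\circ})$ is minimized by the cube (or equivalently the cross-polytope), giving the bound $\mathcal{M}(K) \geq \frac{4^3}{3!} = \frac{32}{3}$. The hypothesis $R_{1}[P] = \lambda R_{1}[P^{\circ}]$ is the crucial structural input: it says the rectification is, up to scaling by $\lambda$, self-dual. First I would use this self-duality to rewrite the polar body. Since $R_{1}[P] = \lambda R_{1}[P^{\circ}]$, taking polars and using the scaling rule $(\lambda K)^{\circ} = \tfrac{1}{\lambda} K^{\circ}$ should let me express $R_{1}[P]^{\circ}$ in terms of $R_{1}[P]$ itself, collapsing the Mahler volume into an expression involving only $\vol(R_{1}[P])$ and $\lambda$.

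The key computation I would carry out is the following. Write $K = R_{1}[P]$ and $K^{\star} = R_{1}[P^{\circ}]$, so $K = \lambda K^{\star}$. I would need to relate $K^{\circ}$ to $K^{\star}$; the natural guess, consistent with the stated bound, is that rectification commutes with polarity on this self-dual locus so that $K^{\circ}$ is proportional to $K^{\star} = \tfrac{1}{\lambda}K$. Under such a relation one obtains $\vol(K^{\circ}) = c\,\lambda^{-3}\vol(K)$ for an explicit constant $c$, and hence the Mahler volume becomes $\mathcal{M}(K) = c\,\lambda^{-3}\vol(K)^2$. Then I would impose the hypothesis $\vol(K) \geq \sqrt{32\lambda/3}$, which gives $\vol(K)^2 \geq 32\lambda/3$, and substitute to force $\mathcal{M}(K) \geq \frac{32}{3}$ once the constant $c$ and the power of $\lambda$ are tracked correctly. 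This verifies the Mahler conjecture for $R_{1}[P]$ directly from the bound.

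For the product body $R_{1}[P] \times R_{1}[P]^{\circ}$, I would invoke the multiplicativity of the Mahler volume under Cartesian products: for bodies $A \subset \mathbb{R}^{m}$ and $B \subset \mathbb{R}^{n}$ one has $(A \times B)^{\circ}$ expressible through $A^{\circ}$ and $B^{\circ}$, and the Mahler volume of a product relates to the product of the individual Mahler volumes up to a binomial normalization constant coming from the volume of the $\ell^1$-ball cross-sections. Since $R_{1}[P] \times R_{1}[P]^{\circ}$ is built from a body and its own polar, its Mahler volume factors in a way that inherits the bound just established for $R_{1}[P]$, so the conjecture transfers to the product automatically.

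The hard part will be establishing the precise relation between $R_{1}[P]^{\circ}$ and $R_{1}[P^{\circ}]$ that underlies the substitution — that is, verifying that polarity and rectification interact in the controlled, scaling-compatible manner the bound presupposes, rather than merely assuming it. This is exactly the content flagged by the preceding conjecture, so I would either cite that conjecture as the governing hypothesis or prove the needed proportionality in the self-dual case $R_{1}[P] = \lambda R_{1}[P^{\circ}]$ directly from the midpoint-convex-hull definition of $R_{1}$. Pinning down the constant $c$ and confirming it matches the $\sqrt{32\lambda/3}$ threshold is the delicate bookkeeping on which the whole argument rests.
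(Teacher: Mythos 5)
The paper states this Lemma without proof (it is only invoked later, in the ``proof'' of the subsequent conjecture), so your proposal can only be judged on its own terms. Judged that way, it has a genuine gap at exactly the point you yourself flag as ``the hard part.'' Your whole computation rests on the guess that $R_{1}[P]^{\circ}$ is proportional to $R_{1}[P^{\circ}]$, i.e.\ that rectification commutes with polarity up to scaling on the self-dual locus. This is false even in the prototypical example satisfying the hypothesis: for the cube $C$, both $R_{1}[C]$ and $R_{1}[C^{\circ}]$ are cuboctahedra (so $\lambda$ exists), but $(R_{1}[C])^{\circ}$ is a rhombic dodecahedron, which is not a dilate of a cuboctahedron. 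So the proportionality $\vol(K^{\circ}) = c\,\lambda^{-3}\vol(K)$ cannot be established, and deferring it to ``delicate bookkeeping'' does not save the argument. What does work is a one-sided containment: $R_{1}[Q] \subseteq Q$ for every convex body $Q$ (it is a hull of edge midpoints), hence $(R_{1}[P])^{\circ} \supseteq P^{\circ} \supseteq R_{1}[P^{\circ}] = \lambda^{-1}R_{1}[P]$, which gives $\vol((R_{1}[P])^{\circ}) \geq \lambda^{-3}\vol(R_{1}[P])$ --- an inequality in the right direction for a lower bound on the Mahler volume, with no commutation claim needed.

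Even with that repair, your bookkeeping does not close. Dilation by $\lambda^{-1}$ in $\mathbb{R}^{3}$ scales volume by $\lambda^{-3}$, so the chain above gives $\vol(R_{1}[P])\,\vol((R_{1}[P])^{\circ}) \geq \lambda^{-3}\vol(R_{1}[P])^{2}$; the hypothesis $\vol(R_{1}[P])^{2} \geq 32\lambda/3$ then yields only $32/(3\lambda^{2})$, which meets the target $32/3$ only when $\lambda \leq 1$. The threshold that actually closes the argument is $\vol(R_{1}[P]) \geq \sqrt{32\lambda^{3}/3}$; this discrepancy may well be the paper's, but your proposal promises the powers of $\lambda$ will ``track correctly'' and they do not. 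Note also that $32/3 = 4^{3}/3!$ is the \emph{symmetric} Mahler bound, which presupposes $R_{1}[P]$ is centrally symmetric with the origin in its interior --- neither follows from the hypotheses. Your treatment of the product body is the one part that is essentially right: with $K = R_{1}[P]$ one has $\vol(K\times K^{\circ})\,\vol\left((K\times K^{\circ})^{\circ}\right) = \binom{6}{3}^{-1}\left(\vol(K)\vol(K^{\circ})\right)^{2} \geq (32/3)^{2}/20 = 256/45 = 4^{6}/6!$, so the three-dimensional bound does transfer; but this computation should be written out rather than asserted to hold ``automatically.''
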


\begin{Conjecture}[Mahler Conjecture for Pure Rectifications and a Class of Hanner 6-Polytopes]
If $R_{1}[P]$ is a pure rectification of a convex polyhedron $P$ then the Mahler conjecture is true
for $R_{1}[P]$ and $R_{1}[P] \times R_{1}[P]^{\circ}$.
\end{Conjecture}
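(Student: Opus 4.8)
The plan is to prove both assertions of the conclusion by collapsing them to a single three-dimensional inequality, namely $\vol(R_{1}[P])\,\vol(R_{1}[P]^{\circ}) \geq \tfrac{32}{3}$, which is exactly the Mahler bound $4^{n}/n!$ in dimension $n=3$. First I would dispose of the six-dimensional factor $R_{1}[P] \times R_{1}[P]^{\circ}$ by a dimension-reduction identity. Writing $\oplus$ for the free sum of two bodies placed in complementary coordinate subspaces, polar duality of products gives $(K \times L)^{\circ} = K^{\circ} \oplus L^{\circ}$, and the free-sum volume formula gives $\vol_{m+n}(A \oplus B) = \tfrac{m!\,n!}{(m+n)!}\vol_{m}(A)\vol_{n}(B)$. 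Taking $K = R_{1}[P]$ and $L = R_{1}[P]^{\circ}$ in $\R^{3}$ and using $K^{\circ\circ}=K$ for the (centrally symmetric) rectification yields
$$\vol(K \times K^{\circ})\,\vol\big((K\times K^{\circ})^{\circ}\big) = \frac{3!\,3!}{6!}\big(\vol(K)\vol(K^{\circ})\big)^{2} = \frac{1}{20}\,M(K)^{2},$$
where $M(K)=\vol(K)\vol(K^{\circ})$. Since the dimension-six Mahler bound is $4^{6}/6! = 256/45$, the inequality $M(K \times K^{\circ}) \geq 256/45$ is exactly equivalent to $M(K) \geq 32/3$. This is the sense in which $R_{1}[P]\times R_{1}[P]^{\circ}$ is a Hanner-type six-polytope: it lives in the product/free-sum-closed family on which the Mahler bound is conjecturally sharp, and its Mahler volume is pinned to that of $K$.

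With both halves reduced to $M(R_{1}[P]) \geq 32/3$, I would feed in the two preceding results in sequence. Because $R_{1}[P]$ is assumed to lie in a pure rectification sequence, the preceding Conjecture supplies a scalar $\lambda > 0$ with $R_{1}[P] = \lambda R_{1}[P^{\circ}]$. This is precisely the hypothesis of the Lemma, whose conclusion is the Mahler conjecture for $R_{1}[P]$ and for the product. Consequently the whole statement reduces to verifying the single scale-normalized volume inequality $\vol(R_{1}[P]) \geq \sqrt{32\lambda/3}$ for every member of a pure rectification sequence.

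To discharge that inequality I would use the classification already established: the pure rectification sequences are generated by the finitely many semiregular seeds on the two branches $\eta_{s}$ and $\xi_{s}$ (the octahedron and cuboctahedron on one branch, the icosidodecahedron on the other, together with any further rectifications that remain semiregular). For each such $R_{1}[P]$ I would (i) determine the duality constant $\lambda$ explicitly from the chosen polar normalization — for example, with the cube $[-1,1]^{3}$ and its polar octahedron one finds that the two cuboctahedra $R_{1}[P]$ and $R_{1}[P^{\circ}]$ differ by a fixed factor, fixing $\lambda$ — and (ii) evaluate $\vol(R_{1}[P])$ via the volume formula of the rectification-sequence Theorem, rescaled to the canonical size underlying the Lemma. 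Checking $\vol(R_{1}[P]) \geq \sqrt{32\lambda/3}$ then reduces to a finite list of explicit numerical comparisons.

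The hard part is twofold. The structural obstacle is that the argument is conditional on the preceding Conjecture $R_{1}[P]=\lambda R_{1}[P^{\circ}]$, which is itself unproven; without it there is no $\lambda$ to feed the Lemma, and unconditionally the three-dimensional Mahler bound for $R_{1}[P]$ is as open as the general conjecture, so the entire leverage comes from the self-dual rigidity of rectifications. The computational obstacle, granting that input, is that the volume inequality is genuinely scale-sensitive: rescaling $R_{1}[P]$ changes $\vol(R_{1}[P])$ but leaves $M(R_{1}[P])$ fixed, so one must pin down the precise normalization at which the Lemma's duality--volume relation (effectively $\vol(R_{1}[P]^{\circ}) = \vol(R_{1}[P])/\lambda$) holds and verify the bound there, for every polyhedron in both sequences and every admissible $\lambda$. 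A single member failing $\vol(R_{1}[P]) \geq \sqrt{32\lambda/3}$ would break the reduction, so the crux is to show that the rigidity forced by $R_{1}[P]=\lambda R_{1}[P^{\circ}]$ always drives the volume above the threshold.
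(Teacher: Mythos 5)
Your proposal follows essentially the same route as the paper: the paper's entire proof of this statement is the single line ``Apply Conjecture 1 and Lemma 1,'' i.e.\ use Conjecture 1 to produce the scalar $\lambda>0$ with $R_{1}[P]=\lambda R_{1}[P^{\circ}]$ and then invoke Lemma 1 to conclude. You reproduce exactly that chain, so in that sense you have recovered the intended argument. What you add, and what the paper does not have, is (a) an explicit justification of the six-dimensional half of the claim via $(K\times L)^{\circ}=K^{\circ}\oplus L^{\circ}$ and the free-sum volume formula, giving $\vol(K\times K^{\circ})\,\vol((K\times K^{\circ})^{\circ})=\tfrac{1}{20}M(K)^{2}$ and the identity $\tfrac{1}{20}(32/3)^{2}=256/45=4^{6}/6!$, so that the product case is genuinely equivalent to the three-dimensional case (the paper buries this inside the unproved Lemma 1); and (b) the observation that Lemma 1 carries a \emph{second} hypothesis, $\vol(R_{1}[P])\geq\sqrt{32\lambda/3}$, which Conjecture 1 does not supply.

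Point (b) is the important one: it is a genuine gap in the paper's own proof, and your proposal correctly isolates it. ``Apply Conjecture 1 and Lemma 1'' is not a complete deduction even granting Conjecture 1, because nothing in the paper verifies the volume threshold $\vol(R_{1}[P])\geq\sqrt{32\lambda/3}$ for the members of the pure rectification sequences $\eta_{s}$ and $\xi_{s}$. Your plan to close this by a finite, normalization-careful case check (octahedron, cuboctahedron, icosidodecahedron, and their semiregular successors) is the right idea, but you do not carry it out, and as you note the inequality is scale-sensitive: the Lemma ties together $R_{1}[P]$, $R_{1}[P^{\circ}]$, and implicitly $R_{1}[P]^{\circ}$, and the paper never fixes the normalization under which $\vol(R_{1}[P]^{\circ})=\vol(R_{1}[P])/\lambda$ (versus the naive $\lambda^{3}$ scaling of volumes) is meant to hold, so the threshold cannot yet be checked as stated. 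Until that normalization is pinned down and the finite list of inequalities is verified, both your argument and the paper's remain conditional not only on Conjecture 1 but also on an unverified hypothesis of Lemma 1.
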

\begin{proof}
Apply Conjecture 1 and Lemma 1.
\end{proof}

\begin{Conjecture}
Let $P \subset \mathbb{R}^3$ be a convex polyhedron. Then,
$$\exists \lambda \in \mathbb{R}^+ \; : \; \lambda P^{\circ} \cap P = R_{1}[P].$$
\end{Conjecture}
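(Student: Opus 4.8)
The plan is to pass to the half-space ($H$-)representation of both bodies and to read off the intersection vertex by vertex. Fix a center $o$ for the polarity (to be chosen later) and write $P=\{x:\langle n_{j},x\rangle\le h_{j}\}$ over its facets $F_{j}$ with outward normals $n_{j}$. The polar dual has exactly one facet per vertex $v_{i}$ of $P$, supported by $\langle v_{i},z\rangle=1$; hence the scaled dual is $\lambda P^{\circ}=\{z:\langle v_{i},z\rangle\le\lambda\text{ for all }i\}$, and the intersection $\lambda P^{\circ}\cap P$ is cut out by the facet inequalities of $P$ together with one inequality per vertex of $P$. The first step is to recognize the two distinct roles of these families. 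On a face $F_{j}$ the midpoints of the edges of $F_{j}$ are coplanar and their convex hull is precisely the planar rectification of $F_{j}$, a facet of $R_{1}[P]$ by the definition of $R_{1}$; thus the inequalities coming from $P$ already supply the ``shrunken face'' facets of the rectification, independently of $\lambda$. It then remains to force the dual inequalities $\langle v_{i},z\rangle\le\lambda$ to supply the vertex-figure facets, i.e.\ to truncate each vertex $v_{i}$ exactly at the midpoints of its incident edges.

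The second step is a one-dimensional computation along each edge. Parametrizing the edge $\{v_{i},v_{k}\}$ as $z(t)=(1-t)v_{i}+tv_{k}$, the dual constraint at $v_{i}$ becomes active where $(1-t)\langle v_{i},v_{i}\rangle+t\langle v_{i},v_{k}\rangle=\lambda$, so the cut occurs at $t=(\langle v_{i},v_{i}\rangle-\lambda)/(\langle v_{i},v_{i}\rangle-\langle v_{i},v_{k}\rangle)$. Demanding $t=\tfrac12$, the midpoint, for every edge yields the system
$$\lambda=\tfrac12\big(\langle v_{i},v_{i}\rangle+\langle v_{i},v_{k}\rangle\big)\quad\text{for every edge }\{i,k\}.$$
I would then solve this. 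Symmetry under $i\leftrightarrow k$ forces $\langle v_{i},v_{i}\rangle=\langle v_{k},v_{k}\rangle$ across each edge, so by connectivity of the edge graph all vertices share a common radius $\langle v_{i},v_{i}\rangle=r^{2}$; feeding this back gives $\langle v_{i},v_{k}\rangle=2\lambda-r^{2}$ constant over edges, i.e.\ all edges of equal length, with $\lambda=\tfrac12(r^{2}+\langle v_{i},v_{k}\rangle)$ the unique admissible value. In this situation the dual facets sit at the edge midpoints and $\lambda P^{\circ}\cap P=R_{1}[P]$ follows, recovering the cuboctahedron-type intersections that generate $\eta_{s}$ and $\xi_{s}$.

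The final step, and the main obstacle, is the passage to an \emph{arbitrary} convex polyhedron. The only freedom available is the choice of polarity center $o$ (three parameters) and the single scalar $\lambda$, whereas the system above becomes $\lambda=\tfrac12\big(\langle v_{i}-o,v_{i}-o\rangle+\langle v_{i}-o,v_{k}-o\rangle\big)$, one equation per edge — badly overdetermined in general. The natural instrument is the midsphere (canonical form) theorem: every combinatorial type admits a realization with a sphere tangent to all edges, unique up to Möbius transformations, and I would try to exploit that Möbius freedom to arrange simultaneously that all vertices are equidistant from $o$ and that all edges are equal, which is exactly the hypothesis under which a common $\lambda$ exists. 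The crux is that the edge-tangency point of a midsphere need not be the edge midpoint unless the two endpoints are equidistant from $o$, so the two demands generally conflict. I would therefore first establish the clean case (a midscribed, equal-edge realization of $P$, covering the vertex-transitive and quasiregular solids) and then isolate the discrepancy between tangency point and midpoint as the precise obstruction; I expect this obstruction to be genuine, so that the honest outcome is either a restriction of the conjecture to that class or a relaxation in which the polarity is taken about the midsphere center and the conclusion is read as coincidence of the relevant supporting planes rather than literal equality of the bodies.
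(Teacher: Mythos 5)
The paper offers no proof of this statement --- it is stated and left as a conjecture --- so there is nothing of the author's to compare your argument against; it has to stand on its own, and on its own it settles the question in the negative rather than the affirmative. The reduction to the $H$-representation, the observation that the facet planes of $P$ already carve out the rectified faces, and the edge computation showing that the dual constraint at $v_i$ meets the edge $\{v_i,v_k\}$ at its midpoint iff $\lambda=\tfrac12\left(\langle v_i,v_i\rangle+\langle v_i,v_k\rangle\right)$ are all correct, and the $i\leftrightarrow k$ symmetry argument correctly extracts the necessary conditions: all vertices equidistant from the polarity center and $\langle v_i,v_k\rangle$ constant over edges. Two smaller points even in your ``clean case'': you must still check that no dual facet cuts into $R_1[P]$ away from its own vertex, i.e.\ $\langle v_i,(v_j+v_k)/2\rangle\le\lambda$ for every edge $\{j,k\}$ not containing $i$ (this holds for the vertex-transitive solids because adjacent vertices maximize $\langle v_i,\cdot\rangle$ among the other vertices, but it is an extra hypothesis, not a consequence of the midpoint conditions); and the conjecture as printed does not even specify the center of polarity, which your computation shows is an essential degree of freedom.

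The genuine gap is the final step, and you have correctly diagnosed that it cannot be closed: one equation per edge against four unknowns is overdetermined, and the midsphere normalization will not rescue it because M\"obius transformations do not preserve edge midpoints, and edge-tangency points of the midsphere coincide with midpoints only when the endpoints are equidistant from the center --- which is the very condition you are trying to arrange. You should push your own computation one step further into an explicit counterexample, which is easy for a tetrahedron: the vertex-figure face of $R_1[P]$ at $v_1$ is spanned by the midpoints $(v_1+v_j)/2$, $j\ne 1$, hence is parallel to the face opposite $v_1$; for it to be a facet of $\lambda P^{\circ}$ the normal $v_1-o$ must be parallel to that face's normal, confining $o$ to the altitude line through $v_1$. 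The four altitudes of a tetrahedron are concurrent only in the orthocentric case, so a generic tetrahedron admits no center $o$ and no $\lambda$ whatsoever. The honest conclusion is the one you hedge toward at the end: the conjecture is false as stated, and what your argument actually proves is the corrected statement that $\lambda P^{\circ}\cap P=R_1[P]$ holds precisely when some center makes all vertices concyclic on a sphere with $\langle v_i,v_k\rangle$ constant over edges (plus the nearest-neighbour condition above) --- the class containing the Platonic and quasiregular solids that drive $\eta_{s}$ and $\xi_{s}$. Write it up as that theorem plus the tetrahedron counterexample, not as a proof of the conjecture.
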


\end{document}